\newtheorem{Lemma}{Lemma}[section]
\newtheorem{Theorem}[Lemma]{Theorem}
\newcommand{\graph}{\operatorname{graph}}
\newcommand{\de}[2]{\frac{\partial #1}{\partial #2}}
\begin{document}
\title{Hopf type rigidity for Thermostats}

\author[Y. M. Assylbekov]{Yernat M. Assylbekov}
\address{Department of Mathematics, University of Washington, Seattle, WA 98195-4350, USA}
\email{y\_assylbekov@yahoo.com}

\author[N.S. Dairbekov]{Nurlan S. Dairbekov}
\address{Kazakh British Technical University,
Tole bi 59, 050000 Almaty, Kazakhstan }
\email{Nurlan.Dairbekov@gmail.com}

\begin{abstract}
We study the motion of a particle on a Riemannian 2-torus under the influence of a magnetic field and a Gaussian thermostat. We prove a Hopf type rigidity for this dynamical system without conjugate points.
\end{abstract}

\maketitle

\section{Introduction and statement of the result}
It was proved by E. Hopf \cite{H} that a Riemannian 2-torus without conjugate
points is necessarily flat. To higher
dimensions this result was generalized in \cite{BI}.
The results of \cite{B,BP,K} show that this type of rigidity
holds also for dynamical systems more general than the geodesic flow.
In this paper we establish a Hopf type rigidity for a thermostat on a 2-torus.

Let $(M,g)$ be a closed oriented Riemannian surface, and $SM$ its unit sphere bundle
with canonical projection $\pi:SM\to M$, $\pi(x,v)=x$.
Given a function $f\in C^\infty(M)$ and a smooth vector field ${\bf e}$ on $M$,
let $\lambda\in C^\infty(SM)$ be the function on $SM$ given by
\begin{equation}\label{lambda}
\lambda(x,v):=f(x)+\langle {\bf e}(x), iv\rangle,
\end{equation}
where $i$ indicates the rotation by $\pi/2$ according to the orientation of $M$.
A curve $\gamma(t)$ on $M$ satisfying the equation
\begin{equation}\label{thermo}
D_t\dot\gamma=\lambda(\gamma,\dot\gamma)i\dot\gamma
\end{equation}
is called a {\it thermostat geodesic}. Here and futher $D_t$ denotes the covariant derivative along $\gamma$.
Equation \eqref{thermo} also
defines a flow $\phi$ on $SM$, to be called the flow
of the {\it thermostat} $(M,g,f,{\bf e})$.

The flow $\phi$ reduces to the geodesic flow when ${\bf e} = f = 0$. If ${\bf  e} = 0$, then $\phi$ is the magnetic flow associated with the magnetic field $\Omega=f\Omega_a$, where $\Omega_a$ is the area form of $M$. If $f = 0$, we obtain the {\it Gaussian thermostat},  which is reversible in the sense that the flip $(x, v)\mapsto(x,-v)$ conjugates $\phi_t$ with $\phi_{-t}$ (just as in the case of geodesic flows). Thus the dynamical system governed by (\ref{thermo}) describes the motion of a particle on $(M,g)$ under the combined influence of a magnetic field $f\Omega_a$ and a thermostat with external field ${\bf e}$.

Magnetic flows were firstly considered in \cite{AS, Ar} and it was shown in \cite{ArG, Ko, N1, N2, NS, PP} that they are related to dynamical systems, symplectic geometry, classical mechanics and mathematical physics. Gaussian thermostats provide interesting models in non-equilibrium statistical mechanics \cite{G,GR,R}.

We define the exponential map at $x\in M$ to be
$$
\exp_x^{\lambda}(tv)=\pi(\phi_t(x,v)), \quad t>0, \quad v\in S_xM.
$$

We say that the thermostat in question {\it has no conjugate points} if $\exp_x^{\lambda}$ is a local diffeomorphism for all $x\in M$. The main result of this paper is a Hopf type rigidity for a thermostat on a 2-torus $\mathbb T^2$.

\begin{Theorem}\label{theorem A} A thermostat $({\mathbb T}^2,g,f,{\bf e})$ has no conjugate points if and only if $f=0$ and there is $U\in C^\infty({\mathbb T}^2)$ such that $\mbox{\rm div}({\bf e}+\nabla U)=0$ and the metric $g_1=e^{-2U}g$ is flat.
\end{Theorem}

The proof of the main theorem follows the original scheme by E.~Hopf with modifications
for our dynamical system.
Section 2 gives an explanation of how the function $U$ in
Theorem \ref{theorem A} is shosen. In Section 3 we collect some preliminary information on
thermostats. In Section 4 we restate the no-conjugate-points condition
in terms of the Jacobi equation in analogy with the case of a geodesic flow.
One of the main ingredients in the proof of Theorem \ref{theorem A} is
the construction of an integrable solution of the Riccati equation.
This solution is constructed in Section 5.
Finally, in Section 6 we complete the proof of Theorem \ref{theorem A}.

\section{Smooth time change of a thermostat}
Let $\gamma(t)$ be a unit speed solution of (\ref{thermo}), and $U\in C^{\infty}({\mathbb T}^2,\mathbb R)$. It is well known that we can make the following time change in the flow $\phi$
$$
s(t)=\int_0^t e^{-U(\gamma(\tau))}\,d\tau,
$$
so that the curve $\gamma_1(s):=\gamma(t(s))$ is a unit speed solution of the thermostat determined by the triple $(e^{-2U}g, e^U f, e^{2U}({\bf e}+\nabla U))$ (see \cite[Section 2.1]{P}).

We choose $U$ so that it satisfies ${\rm div}({\bf e}+\nabla U)=0$. Note that we may always find such a function. This yields $\mbox{\rm div}{\bf e_1}=0$ with respect to the metric $g_1:=e^{-2U}g$, where ${\bf e_1}:=e^{2U}({\bf e}+\nabla U)$. The flow of the thermostat $({\mathbb T}^2, g, f, {\bf e})$ is a smooth time change of the flow of the thermostat $({\mathbb T}^2,g_1, f_1, {\bf e_1})$, where $f_1:=e^Uf$.

It is easy to see that the thermostat $({\mathbb T}^2, g, f, {\bf e})$ has no conjugate points if and only if so does $({\mathbb T}^2,g_1,f_1, {\bf e_1})$. Indeed, set $\lambda_1:=f_1+\langle{\bf e}_1,iv_1\rangle_1$, where $\langle\cdot,\cdot\rangle_1$ is the inner product with respect to $g_1$ and $v_1:=e^Uv$. An easy calculation shows that
$$
d_{tv}\exp_x^{\lambda}=e^{-U}d_{s v_1}\exp_x^{\lambda_1}.
$$
Thus, $\exp_x^{\lambda}$ is a local diffeomorphism if and only if $\exp_x^{\lambda_1}$ is a local diffeomorphism. So, to prove Theorem \ref{theorem A} it is enough to show that $f=0$ and the metric $g_1$ is flat. From now on, we will consider the thermostat $({\mathbb T}^2, g_1,f_1,{\bf e}_1)$, but we will omit the subscript 1 to simplify notation.

\section{Preliminaries on thermostats}
In the next three sections $M$ denotes a closed oriented surface and $SM$ its unit sphere bundle with canonical projection $\pi:SM\to M$. The latter is in fact
a principal $S^{1}$-fibration and we let $V$ be the infinitesimal
generator of the action of $S^1$.

Given a unit vector $v\in T_{x}M$, we denote by $iv$ the
unique unit vector orthogonal to $v$ such that $\{v,iv\}$ is an
oriented basis of $T_{x}M$. There are two basic 1-forms $\alpha$
and $\beta$ on $SM$ which are defined by the formulas:
\begin{equation}\label{alpha}
\alpha_{(x,v)}(\xi):=\langle d_{(x,v)}\pi(\xi),v\rangle;
\end{equation}
\begin{equation}\label{beta}
\beta_{(x,v)}(\xi):=\langle d_{(x,v)}\pi(\xi),iv\rangle.
\end{equation}
The form $\alpha$ is the canonical contact form of $SM$ whose Reeb vector
field is the geodesic vector field $X$.
The volume form $\Theta:=\alpha\wedge d\alpha$ gives rise to the Liouville measure
on $SM$.

A basic theorem in 2-dimensional Riemannian geometry asserts that
there exists a~unique 1-form $\psi$ on $SM$ (the connection form)
such that $\psi(V)=1$ and
\begin{align}
& d\alpha=\psi\wedge \beta,\label{riem1}\\ & d\beta=-\psi\wedge
\alpha,\label{riem2}\\ & d\psi=-(K\circ\pi)\,\alpha\wedge\beta,
\label{riem3}
\end{align}
where $K$ is the Gaussian curvature of $M$. In fact, the form
$\psi$ is given by
\begin{equation}\label{psi}
\psi_{(x,v)}(\xi)=\left\langle \frac{DZ}{dt}(0),iv\right\rangle,
\end{equation}
where $Z:(-\varepsilon,\varepsilon)\to SM$ is any curve with
$Z(0)=(x,v)$,  $\dot{Z}(0)=\xi$,  and $\frac{DZ}{dt}$ is the
covariant derivative of $Z$ along the curve $\pi\circ Z$.

For later use it is convenient to introduce the vector field $H$
uniquely defined by the conditions $\beta(H)=1$ and
$\alpha(H)=\psi(H)=0$. The vector fields $X,H$ and $V$ are dual to
$\alpha,\beta$ and $\psi$ and as a consequence of (\ref{riem1}--\ref{riem3}) they satisfy the commutation relations
\begin{equation}\label{comm}
[V,X]=H,\quad [V,H]=-X,\quad [X,H]=KV.
\end{equation}
Equations (\ref{riem1}--\ref{riem3}) also imply that the vector fields
$X,H$ and $V$ preserve the volume form $\Theta$ and hence
the Liouville measure on $SM$.

Let $\lambda$ be the smooth function on $SM$ given by (\ref{thermo}), and let $F$ be the generating vector field of the thermostat flow.
Then
\begin{equation}\label{F}
F=X+\lambda V.
\end{equation}
Indeed, with $\gamma(t)=\pi\circ\phi_t(x,v)$, by straightforward calculations we get from (\ref{psi}) and (\ref{alpha}--\ref{beta})
\begin{align*}
\psi(F(x,v))&=\langle D_t\dot\gamma(0),iv\rangle=\langle \lambda(\gamma,\dot\gamma)i\dot\gamma,iv\rangle=\lambda(x,v),\\
\alpha(F(x,v))&=\langle d\pi(F(x,v)),v\rangle=\langle v,v\rangle=1,\\
\beta(F(x,v))&=\langle d\pi(F(x,v)),iv\rangle=\langle v,iv\rangle=0.
\end{align*}
Hence $F=\alpha(F)X+\beta(F)H+\psi(F)V=X+\lambda V$.

From (\ref{comm}) and (\ref{F}) we obtain:
\begin{equation}\label{bracket}
[V,F]=H-qV,\quad [V,H]=-F+\lambda V,\quad
[F,H]=-\lambda F+k V
\end{equation}
with
\begin{equation}\label{K}
q=-V(\lambda),\qquad k=K-H(\lambda)+\lambda^2.
\end{equation}

\section{Thermostats without conjugate points}
The aim of this section is to prove the following
\begin{Theorem}\label{MS3}
A thermostat $(M,g,f,\mathbf e)$ has no conjugate points if and only if all solutions of the
Jacobi equation
\begin{equation}\label{Jacobi-eq}
\ddot{y}+q\dot{y}+ky=0
\end{equation}
on any unit speed thermostat geodesic vanish at most once.
\end{Theorem}

We consider a variation of the thermostat geodesic $\gamma(t)=\pi\circ\phi_t(x,v)$ for some $(x,v)\in SM$. We set this variation to be $c(s,t)=\pi(\phi_t(Z(s)))$, where
$Z$ is a curve in $SM$ with $\dot Z(0)=\xi\in T_{(x,v)}SM$. The vector field defined as $J_{\xi}(t):=\de{c}{s}\Big|_{s=0}(t)$ is called a {\it Jacobi field} along $\gamma$
(it depends on $\xi$).

\begin{Lemma}\label{cor1}Every Jacobi field $J_{\xi}$,  written down in the form
$$
J_{\xi}(t)=x(t)\dot\gamma(t)+y(t)i\dot\gamma(t),
$$
satisfies the following {\it Jacobi equations}:
\begin{align}
\dot{x}&=\lambda y,\label{Jacobi-eqx}\\
\ddot{y}&+q\dot{y}+ky=0,\label{Jacobi-eq1}
\end{align}
with $q$ and $k$ defined by (\ref{K}). In particular, if a Jacobi field $J$ is tangent to the thermostat geodesic $\gamma$ everywhere, then $J=c\dot\gamma$, where $c=const$.

\end{Lemma}
\begin{proof}
For $\xi\in T(SM)$ write
$$
d\phi_t(\xi)=x(t)F+y(t)H+z(t)V.
$$
Equivalently,
$$
\xi=x(t)d\phi_{-t}(F)+y(t)d\phi_{-t}(H)+z(t)d\phi_{-t}(V).
$$
If we differentiate the last equality with respect to $t$ we obtain:
$$
0=\dot{x}F+\dot{y}H+y[F,H]+\dot{z}V+z[F,V].
$$
Using the bracket relations (\ref{bracket}) and regrouping we obtain \eqref{Jacobi-eqx} and \eqref{Jacobi-eq1}.
\end{proof}

Let $\gamma:[0,T]\to M$ be a unit speed thermostat geodesic with endpoints $x=\gamma(0)$ and $y=\gamma(T)$. We say that $x$ and $y$ are {\it conjugate along $\gamma$} if the exponential map $\exp_x^{\lambda}$ is singular at $T\dot\gamma(0)$, i.e., the differential $d_{T\dot\gamma(0)} \exp_x^{\lambda}$ has non-maximal rank. Note that this definition does not contradict the definition of the absence of conjugate points that was in the introduction. The latter definition shows that conjugate points on any thermostat geodesic cannot be arbitrarily close to one another.

There exists a simple but very useful relation between the singular points of the exponential map and the Jacobi fields. In the following theorem we will describe it.

\begin{Theorem}\label{conj-along-geodesic}
Let $\gamma:[0,T]\to M$ be a unit speed thermostat geodesic with endpoints $x=\gamma(0)$ and $y=\gamma(T)$. Then $x$ and $y$ are conjugate along $\gamma$ if and only if there exists a non-trivial Jacobi field $J$ along $\gamma$ satisfying $J(0)=J(T)=0$.
\end{Theorem}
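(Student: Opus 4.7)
The plan is to identify the kernel of $d_{Tv_0}\exp_x^\lambda$, at $v_0=\dot\gamma(0)$, with nonzero Jacobi fields along $\gamma$ vanishing at $0$ and $T$. By the definition given earlier, $x$ and $y=\gamma(T)$ are conjugate along $\gamma$ exactly when $\exp_x^\lambda$ fails to be a local diffeomorphism at $Tv_0$, equivalently (domain and codomain having equal dimension) when $\ker d_{Tv_0}\exp_x^\lambda\neq 0$. So the theorem reduces to a linear-algebra identification between this kernel and the boundary-vanishing Jacobi fields.

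First I would compute the differential via a ray variation. For $w\in T_x\mathbb T^2$ decomposed as $w=av_0+b\,iv_0$, choose $u(s)=r(s)v(s)$ in $T_x\mathbb T^2$ with $u(0)=Tv_0$ and $\dot u(0)=w$, so that $\dot r(0)=a$ and $\dot v(0)=(b/T)\,iv_0$. Differentiating $\exp_x^\lambda(u(s))=\pi\bigl(\phi_{r(s)}(x,v(s))\bigr)$ by the chain rule and using that $F=X+\lambda V$ satisfies $d\pi(V)=0$ and $d\pi(X)=\dot\gamma$, I expect
\[
d_{Tv_0}\exp_x^\lambda(w)=d\pi\bigl(d\phi_T(\dot Z(0))\bigr)+a\,\dot\gamma(T),
\]
where $Z(s)=(x,v(s))$ is a vertical curve in $S\mathbb T^2$. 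By Lemma \ref{dphi=J,DJ} the first summand equals $J(T)$, where $J$ is the Jacobi field with $J(0)=0$ and $\dot J(0)=(b/T)\,iv_0$.

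For the biconditional, a nonzero $w\in\ker d_{Tv_0}\exp_x^\lambda$ yields the relation $J(T)=-a\,\dot\gamma(T)$, and by Corollary \ref{cor1} the sum $\widetilde J:=J+a\dot\gamma$ is itself a Jacobi field with $\widetilde J(T)=0$; conversely, a nonzero Jacobi field $\widetilde J$ with $\widetilde J(0)=\widetilde J(T)=0$ corresponds via Lemma \ref{dphi=J,DJ} to some $\xi\in T_{(x,v_0)}S\mathbb T^2$ lying in $\ker d\pi\cap(d\phi_T)^{-1}(\ker d\pi)$, from which one reads off a nonzero kernel element of $d_{Tv_0}\exp_x^\lambda$. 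The main obstacle is the bookkeeping between the $2$-dimensional domain of $\exp_x^\lambda$ and the $3$-dimensional phase-space of Jacobi fields: the extra direction is reparametrization by $F$, which contributes precisely the $a\,\dot\gamma(T)$-term, and Corollary \ref{cor1} must be applied carefully so that both endpoint conditions $\widetilde J(0)=\widetilde J(T)=0$ are genuinely achieved on the constructed Jacobi field and non-triviality is preserved in both directions of the equivalence.
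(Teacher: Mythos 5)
Your overall strategy is the paper's: identify $\ker d_{Tv_0}\exp_x^\lambda$ with Jacobi fields along $\gamma$ vanishing at $0$ and $T$. The paper does this in one stroke via its Lemma \ref{0,w}, which asserts that $t\mapsto d_{tv_0}\exp_x^\lambda(tw)$ is a Jacobi field with $J(0)=0$ and $D_tJ(0)=w$; both implications are then read off by evaluating at $t=T$. Your chain-rule computation of the differential, $d_{Tv_0}\exp_x^\lambda(av_0+b\,iv_0)=a\dot\gamma(T)+J(T)$ with $J(0)=0$ and $\dot J(0)=(b/T)\,iv_0$, is correct and in fact more explicit than anything the paper writes down, and your converse direction (taking $a=0$ and reading off a vertical kernel vector from a Jacobi field vanishing at both ends) goes through.

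However, the ``bookkeeping obstacle'' you flag at the end is not bookkeeping; it is a genuine unresolved gap in the forward direction. If $w=av_0+b\,iv_0$ is a nonzero kernel vector, then $J(T)=-a\dot\gamma(T)$ and $\widetilde J:=J+a\dot\gamma$ is indeed a Jacobi field with $\widetilde J(T)=0$, but $\widetilde J(0)=J(0)+a\dot\gamma(0)=av_0$, which is nonzero unless $a=0$. Writing $J=x\dot\gamma+y\,i\dot\gamma$, the kernel condition forces $y(T)=0$ and $a=-x(T)=-\int_0^T\lambda y\,dt$, and nothing makes this integral vanish. Nor can the defect be repaired by a different tangential correction: by Corollary \ref{cor1} (equivalently, by \eqref{Jacobi-eqx} with $y\equiv 0$) the only tangential Jacobi fields are $a\dot\gamma$ with $a$ constant, so no correction can vanish at $T$ without also vanishing at $0$. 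What your computation honestly establishes is that $d_{Tv_0}\exp_x^\lambda$ is singular iff the scalar Jacobi equation \eqref{Jacobi-eq} has a nontrivial solution with $y(0)=y(T)=0$ --- which is precisely what Theorem \ref{MS3} uses later --- not that a vector-valued Jacobi field vanishes at both endpoints. To be fair, the paper's own proof buries the same difficulty inside Lemma \ref{0,w}: for $w$ with a radial component the field $d_{tv_0}\exp_x^\lambda(tw)$ has tangential part violating \eqref{Jacobi-eqx}, so it is not a Jacobi field in the paper's own sense. You located the sore point correctly, but ``apply Corollary \ref{cor1} carefully'' cannot close it as stated.
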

\begin{proof}
Putting $v=\dot\gamma(0)$, we have $\gamma(t)=\exp^{\lambda}_x(tv)$. We need the following
\begin{Lemma}\label{0,w}
If $w\in T_x M$ then $\tilde J(t)=d_{tv}\exp^{\lambda}_x(tw)$ is a Jacobi field along $\gamma$. Moreover, $\tilde J(0)=0$, $D_{t}\tilde J(0)=w$.
\end{Lemma}
Postponing the proof of the lemma,  we finish the proof of Theorem \ref{conj-along-geodesic}. Suppose there exists a nonzero vector $w\in T_x M$ such that $d_{Tv}\exp^{\lambda}_x(w)=0$. Then,  by Lemma \ref{0,w}, $J(t)=d_{tv}\exp^{\lambda}_x(tT^{-1}w)$ is a non-trivial Jacobi field satisfying $J(0)=J(T)=0$.
Conversely, assume exists a non-trivial Jacobi field $J$ along $\gamma$ such that $J(0)=J(T)=0$. Set $w=D_t J(0)$. By Lemma \ref{0,w}, the Jacobi field $\tilde J(t)=d_{tv}\exp^{\lambda}_x(tw)$ and the Jacobi field $J(t)$ have the same initial data $\tilde J(0)=J(0)=0$ and $D_t \tilde J(0)=D_t J(0)=w$,  and therefore coincide.  In consequence, $d_{T\dot\gamma(0)}\exp^{\lambda}_x(Tw)=\tilde J(T)=J(T)=0$. This means that $w\in\ker d_{T\dot\gamma(0)}\exp^{\lambda}_x$.
\end{proof}

\begin{proof}[Proof of Lemma \ref{0,w}]
Consider the variation $c(s,t)=\exp^{\lambda}_x(t(v+sw))$ of $\gamma$. Since
$$
\de{c}{s}(s,t)=\de{\exp^{\lambda}_x(t(v+sw))}{s}=d_{t(v+sw)}\exp^{\lambda}_x(tw),
$$
the vector field $J(t)$ is a Jacobi field. The map $d_0\exp^{\lambda}_x$ is the identity map; therefore,
$$
J(0)=d_{0}\exp^{\lambda}_x(0)=0.
$$
It is well known that $D_s Y(s,t)=D_t J(s,t)$, where $Y(s,t)=\de{c}{t}(s,t)$ and $J(s,t)=\de{c}{s}(s,t)$. Hence
$$
D_t J(s,0)=D_sY(s,0)=D_s(d_0\exp^{\lambda}_x(v+sw))=\de{v+sw}{s}=w.
$$
\end{proof}

Let $\xi\in T_{(x,v)}TM$, and $Z:(-\varepsilon,\varepsilon)\to TM$ be any curve with $Z(0)=(x,v)$ and $\dot{Z}(0)=\xi$. Write $z(t)=\pi\circ Z(t)$ and define the {\it connection map}
$$
{\mathcal K}_{x,v}(\xi):=\nabla_{z}\dot{z}(0)\in T_x M.
$$
For $(x,v)\in TM$, define the {\it vertical} and {\it horizontal} subbundles by
$$
{\mathcal V}(x,v):=\ker d_{(x,v)}\pi\quad\text{and}\quad\mathcal H(x,v):=\ker\mathcal K_{(x,v)}
$$
respectively. So we obtain the following isomorphism:
$$
T_{(x,v)}TM\to T_x M\oplus T_x M,\quad
\xi\mapsto(d\pi_{(x,v)}(\xi),\mathcal K_{(x,v)}(\xi)).
$$

Define
$$
E(x,v):={\mathcal V}(x,v)\oplus\mathbb{R}F(x,v).
$$

\begin{Lemma}\label{E-cap-V=0} If  $\gamma:[0,T]\to M$ is a thermostat geodesic, then
$$
d_{\dot{\gamma}(0)}\phi_t(E)\cap {\mathcal V}(\dot{\gamma}(t))=\{0\}
$$
for every $t\in(0,T]$.
\end{Lemma}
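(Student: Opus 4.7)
The plan is to identify the intersection $d_{\dot\gamma(0)}\phi_t(E)\cap\mathcal V(\dot\gamma(t))$ with the kernel of $d_{tv}\exp^\lambda_x$, where $v=\dot\gamma(0)$, and then invoke the standing no-conjugate-points hypothesis of this section to force this kernel to be trivial. Fix $\xi\in E(\dot\gamma(0))$ and write $\xi = aF(x,v)+bV(x,v)$ with $(x,v)=\dot\gamma(0)$. By Lemma~\ref{dphi=J,DJ}, the condition $d\phi_t(\xi)\in\mathcal V(\dot\gamma(t))$ is equivalent to the vanishing of the horizontal component $d\pi(d\phi_t(\xi)) = J_\xi(t)$, so everything reduces to expressing the Jacobi field $J_\xi$ in terms of $\exp^\lambda_x$.

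To do so, decompose $J_\xi = a\dot\gamma + (J_\xi - a\dot\gamma)$. By Corollary~\ref{cor1}, $a\dot\gamma$ is itself a Jacobi field, so the remainder is a Jacobi field as well. Using $J_\xi = x\dot\gamma+y\,i\dot\gamma$ with initial data $x(0)=a$, $y(0)=0$, $\dot x(0)=\lambda y(0)=0$, $\dot y(0)=b$, together with the thermostat equation $D_t\dot\gamma(0)=\lambda(x,v)\,iv$, one computes $J_\xi(0)=av$ and $D_tJ_\xi(0)=(a\lambda(x,v)+b)\,iv$. The offending $a\lambda\,iv$ term is cancelled exactly by $D_t(a\dot\gamma)(0)=a\lambda(x,v)\,iv$, so
$$
(J_\xi - a\dot\gamma)(0)=0,\qquad D_t(J_\xi - a\dot\gamma)(0)=b\,iv.
$$
Applying Lemma~\ref{0,w} with $w=b\,iv$ yields $(J_\xi - a\dot\gamma)(t)=d_{tv}\exp^\lambda_x(tb\,iv)$. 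Combined with $a\dot\gamma(t)=d_{tv}\exp^\lambda_x(av)$, which is immediate from differentiating the radial ray $\tau\mapsto\exp^\lambda_x(\tau v)=\gamma(\tau)$ at $\tau=t$, linearity of $d_{tv}\exp^\lambda_x$ gives the key identity
$$
J_\xi(t)=d_{tv}\exp^\lambda_x\bigl(av+tb\,iv\bigr).
$$

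Now suppose $d\phi_t(\xi)\in\mathcal V(\dot\gamma(t))$ for some $t\in(0,T]$. Then $d_{tv}\exp^\lambda_x(av+tb\,iv)=0$. The no-conjugate-points hypothesis says that $\exp^\lambda_x$ is a local diffeomorphism, so $d_{tv}\exp^\lambda_x$ is injective; hence $av+tb\,iv=0$. Since $\{v,iv\}$ is a basis of $T_xM$ and $t>0$, both $a$ and $b$ must vanish, so $\xi=0$.

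The main point to be careful about is the computation of $D_tJ_\xi(0)$: because a thermostat geodesic has acceleration $\lambda\,i\dot\gamma$ rather than zero, the tangential coefficient $x$ of the Jacobi field contributes a spurious $a\lambda\,iv$ term at $t=0$, and it is precisely the subtraction of the tangential Jacobi field $a\dot\gamma$ (available thanks to Corollary~\ref{cor1}) that cancels this term and reduces the situation to the zero-initial-value setting of Lemma~\ref{0,w}.
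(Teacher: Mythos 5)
Your proof is correct and follows essentially the same route as the paper's: both identify $d\pi\circ d\phi_t|_E$ with the differential $d_{tv}\exp^{\lambda}_x$ (the paper via the identity $\operatorname{image}(d_{tv}\exp^{\lambda}_x)=d\pi(d\phi_t(E))$ and a dimension count, you via the explicit formula $J_{\xi}(t)=d_{tv}\exp^{\lambda}_x(av+tb\,iv)$) and then invoke the absence of conjugate points to make $d_{tv}\exp^{\lambda}_x$ a linear isomorphism. Your version merely spells out, including the cancellation of the $a\lambda\,iv$ term by subtracting the tangential Jacobi field $a\dot\gamma$, the step the paper dismisses as ``straightforward.''
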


\begin{proof}
Take $(x,v)\in SM$ and $t\in(0,T]$. From the definition of $\exp^{\lambda}$ it is straightforward that
$$\text{image}(d_{tv}\exp^{\lambda}_x)=d_{\dot{\gamma}(t)}\pi(d_{\dot{\gamma}(0)}\phi_t(E)).$$
By the absence of conjugate points, $d_{w}\exp^{\lambda}_x$ is a linear isomorphism for every $w\in T_x M$ at which $\exp^{\lambda}_x$ is defined, and the lemma follows.
\end{proof}

\begin{proof}[Proof of Theorem \ref{MS3}]
Assume that a thermostat has no conjugate points and let $\gamma(t)$,  $0\le t\le T$,  be a unit speed thermostat geodesic. Using Lemma \ref{E-cap-V=0}, we see that $d_{\dot{\gamma}(0)}\phi_t(E)$ as a graph over the horizontal subspace for $t\in(0,T]$. We can express
$$
d_{\dot{\gamma}(0)}\phi_t(E)=\graph S:=\{(v,S(t)v), v\in \mathcal H(\dot\gamma(t))\}
$$
with $S(t):\mathcal H(\dot\gamma(t))\to \mathcal V(\dot\gamma(t))$ for $t\in(0,T]$. It is proved in \cite[Lemma 3.1]{JP} that
\begin{equation}\label{dp=j,dj}
d\phi_t(\xi)=(J_{\xi}(t),\dot J_{\xi}(t)).
\end{equation}
Let $u(t):=\langle S(t)i\dot\gamma,i\dot\gamma\rangle$. By \eqref{dp=j,dj}, $\dot J_{\eta}=SJ_{\eta}$ for all $\eta\in T_{(x,v)}SM$,  whence we immediately deduce that $\dot y=uy$. Since $u$ is well defined for all $t\in(0,T]$,  it is easy to see that $y$ never vanishes for $t\in(0,T]$.

Conversely, suppose that,  for every unit speed thermostat geodesic, any solution of \eqref{Jacobi-eq} with $y(a)=y(b)=0$ is identically zero,  $y\equiv0$, and let $J(t)$ be a Jacobi field such that $J(a)=J(b)=0$. Using Lemma \ref{cor1},  we see that $J=c\dot\gamma$. As soon as $J(a)=J(b)=0$, we must have $c=0$, and the field $J$ must vanish identically. Thus, by Theorem \ref{conj-along-geodesic} there are no conjugate points.
\end{proof}

\section{Riccati equation}
Let $\gamma(t)$, $-\infty<t<+\infty$, be a complete unit speed thermostat geodesic.
The Jacobi equation on $\gamma$ is:
\begin{equation}\label{j-e}
\ddot{y}+q\dot{y}+ky=0.
\end{equation}
If $y(t)$ is a nowhere vanishing solution of \eqref{j-e},  then
$r(t)=\frac{\dot y(t)}{y(t)}$ is a solution of the {\it Riccati equation}
\begin{equation}\label{ric-1}
\dot{r}+r^2+qr+k=0.
\end{equation}

Let
$$
m(t):=\exp{\left(-\frac12\int q(t)\,dt\right)}.
$$
If $y(t)=m(t)z(t)$ then $z(t)$ is a solution of
the equation
\begin{equation}\label{j-e-z}
\ddot{z}+\tilde k z=0,
\end{equation}
where
\begin{equation}\label{tilde-k}
\tilde k(t)=k(t)-\frac{\dot q}2+\frac{q^2}4.
\end{equation}
Since $m(t)$ is nowhere zero,  equation \eqref{j-e} has no conjugate points if
and only if so does equation \eqref{j-e-z}.

The Riccati equation corresponding to \eqref{j-e-z} is
\begin{equation}\label{ric-2}
\dot{u}+u^2+\tilde k=0.
\end{equation}
Clearly, the solutions of \eqref{ric-1} and \eqref{ric-2} are related by
\begin{equation}\label{u-r}
r(t)=u(t)-q(t)/2.
\end{equation}

Observe that,  once $SM$ is compact, there is a constant $A\ge0$ such that
$$
|\tilde k(x,v)|=\left|k(x,v)-\frac{F(q(x,v))}2+\frac{q^2(x,v)}4\right|\le A^2
$$
for all $(x,v)\in SM$. Since $\tilde k(t)$ is the restriction of
$\tilde k(x,v)$ to $(\gamma,\dot\gamma)$, we have
$$
|\tilde k(t)|\le A^2.
$$

In \cite{H},  Hopf constructed a solution $u(t)$ of \eqref{ric-2} such that $|u(t)|\le A$
for all $t$. Considering all $\gamma$ gives a bounded function $u(x,v)$ on $SM$ whose
resctriction to any $\gamma$ is a solution of \eqref{ric-2}, and Hopf proves in \cite{H}
that this $u(x,v)$ is a measurable function on $SM$.
In view of \eqref{u-r},  taking $r(x,v)=u(x,v)-q(x,v)/2$
then yields a bounded measurable function $r(x,v)$ whose restriction to any $\gamma$
is a solution of \eqref{ric-1}. From \eqref{ric-1} we readily infer that $r(x,v)$
satisfies the following equation on $SM$:
\begin{equation}\label{riccati-1}
F(r)+r^2+qr+k=0.
\end{equation}

\section{Proof of Theorem \ref{theorem A}}
\subsection{Necessity}Recall the volume form $\Theta=\alpha\wedge d\alpha$  generating the Liouville measure on $SM$. The Lie derivative $L_F\Theta$ of
$\Theta$ along $F$ satisfies $L_F\Theta=V(\lambda)\Theta=-q\Theta$ (see \cite[Lemma 3.2]{DP}). An easy consequence of the Stokes theorem then yields
$$
\int_{S{\mathbb T}^2}F(r)\Theta=\int_{S{\mathbb T}^2}qr\Theta.
$$
Hence,  integrating (\ref{riccati-1}) we obtain
$$
\int_{S{\mathbb T}^2}qr\Theta+\int_{S{\mathbb T}^2}(r+q)r\Theta=-\int_{S{\mathbb T}^2}k\Theta
=-\int_{S{\mathbb T}^2}(K-H(\lambda)+\lambda^2)\Theta.
$$
Since the vector field $H$ preserves the Liouville measure, we have
$$
\int_{S{\mathbb T}^2}H(\lambda)\Theta=0,
$$
and by the Gauss-Bonnet theorem
$$
\int_{S{\mathbb T}^2}K\Theta=4\pi^2\chi({\mathbb T}^2)=0.
$$
So
\begin{equation}\label{integrated}
\int_{S{\mathbb T}^2}\left\{[V(\lambda)]^2-\lambda^2\right\}\Theta
=\int_{S{\mathbb T}^2}(q^2-\lambda^2)\Theta
=\int_{S{\mathbb T}^2}(r+q)^2\Theta\ge 0.
\end{equation}

Let $\theta_x(v)=\langle{\bf e}(x),v\rangle$. Then $\lambda(x,v)=f+V(\theta_x(v))$. Since $V$ preserves $\Theta$,  we have
$$
\int_{S{\mathbb T}^2}[V(\lambda)]^2\Theta=-\int_{S{\mathbb T}^2}\lambda V^2(\lambda)\Theta.
$$
So by (\ref{integrated}) we get
$$
\int_{S{\mathbb T}^2}\lambda(V^2(\lambda)+\lambda)\Theta=\int_{S{\mathbb T}^2}\{f^2+fV(\theta_x(v))\}\Theta\le 0.
$$
Once again using the fact that $V$ preserves $\Theta$,  we obtain
$$
\int_{S{\mathbb T}^2}fV(\theta_x(v))\Theta=0.
$$
This implies that $f=0$ and
$$
\int_{S{\mathbb T}^2}\left\{ [V(\lambda)]^2-\lambda^2\right\}\Theta=0.
$$

We find from \eqref{integrated} that $r=V(\lambda)$. Now, \eqref{riccati-1} yields
$$
K-H(\lambda)+\lambda^2+F(V(\lambda))=0.
$$

Using $\lambda(x,v)=\langle {\bf e}(x), iv\rangle=V(\theta_x(v))$ and $F=X+\lambda V$, we find
\begin{align*}
K-H(\lambda)+\lambda^2+F(V(\lambda))&=K-H(\lambda)+\lambda^2+X(V(\lambda))+\lambda V^2(\lambda)\\
&=K-H(\lambda)+X(V(\lambda))=0,
\end{align*}
where we have used that
\begin{equation}\label{id-1}
\lambda^2=-\lambda V^2(\lambda).
\end{equation}
Since (cf. the calculations in the proof of Lemma 5.2 in \cite{P})
\begin{equation}\label{id-2}
H(\lambda)-X(V(\lambda))=\mbox{div}{\bf e},
\end{equation}
we receive
$$
K=\mbox{div}{\bf e}.
$$
This completes the proof, because by Section 2 we could assume $\bf e$ solenoidal,
$\mbox{div}{\bf e}=0$. 

\subsection{Sufficiency}

When $K=0$, $f=0$ and $\mbox{div}{\bf e}=0$ by (\ref{id-1}) and (\ref{id-2}) we get
$$
\ddot{y}-\frac{d}{dt}(V(\lambda) y)=0,
$$
and
$$
\frac{d}{dt}(r-V(\lambda))+r(r-V(\lambda))=0.
$$
The Riccati equation has the solution $r=V(\lambda)$,
which shows that there are no conjugate points.

\end{document}